\title[Conjugacy and homogeneous digraphs]{The conjugacy problem for automorphism groups of homogeneous digraphs}
\author{Samuel Coskey} \address{Samuel Coskey, Department of Mathematics, Boise State University, 1910 University Drive, Boise, ID, 83725}
\email{scoskey@nylogic.org}
\urladdr{boolesrings.org/scoskey}
\author{Paul Ellis} \address{Paul Ellis, Department of Mathematics and Computer Science, Manhattanville College, 2900 Purchase Street, Purchase, NY, 10577}
\email{paulellis@paulellis.org}
\urladdr{paullellis.org}
\newcommand{\NN}{\mathbb N}
\newcommand{\ZZ}{\mathbb Z}
\newcommand{\QQ}{\mathbb Q}
\newcommand{\set}[1]{\left\{\,#1\,\right\}}
\DeclareMathOperator{\Aut}{Aut}
\newcommand{\semistar}{\mathbin{\hat{*}}}
\makeatletter\pretocmd{\@seccntformat}{\S}{}{}
  \pretocmd{\@subseccntformat}{\S}{}{}\makeatother
\newtheorem{thm}{Theorem}[section]
\newtheorem{lem}[thm]{Lemma}
\newtheorem{prop}[thm]{Proposition}
\theoremstyle{definition}
\newtheorem{defn}[thm]{Definition}
\begin{document}
\maketitle

\begin{abstract}
  We decide the Borel complexity of the conjugacy problem for automorphism groups of countable homogeneous digraphs. Many of the homogeneous digraphs, as well as several other homogeneous structures, have already been addressed in \cite{summer} and \cite{conjugacy1}. In this article we complete the program, and establish a dichotomy theorem that this complexity is either the minimum or the maximum among relations which are classifiable by countable structures. We also discuss the possibility of extending our results beyond graphs to more general classes of countable homogeneous structures.
\end{abstract}

\section{Introduction}

This article is a contribution to the study of the automorphism groups of countable homogeneous digraphs. We use the term \emph{digraph} in the model-theoretic sense to mean an oriented simple graph. A countable digraph is said to be \emph{homogeneous} if every finite partial automorphism extends to a total automorphism. A survey of the study of countable homogeneous structures can be found in \cite{macpherson}; the countable homogeneous digraphs are classified in \cite{cherlin}.

Our main result will be stated in terms of the Borel complexity theory of equivalence relations. We recall that if $E,F$ are equivalence relations on standard Borel spaces $X,Y$ then we say that $E$ is  \emph{Borel reducible} to $F$ if there is a Borel function $f\colon X\to Y$ such that $x\mathrel{E}x'\iff f(x)\mathrel{F}f(x')$. An equivalence relation $E$ is said to be \emph{smooth} if it is Borel reducible to the equality relation on $\mathbb R$. An equivalence relation $E$ is said to be \emph{Borel complete} if it is Borel reducible to an isomorphism relation on a class of countable structures, and conversely any isomorphism relation on a class of countable structures is Borel reducible to $E$. (Here the countable structures are coded by a sequence of relations on $\NN$.) We will use the standard fact that the isomorphism relations on the classes linear orders and partial orders are Borel complete \cite{FS}. For a resource on Borel complexity theory we refer the reader to \cite{gao}.

In \cite{summer} and \cite{conjugacy1}, we sought to compute the Borel complexity of the \emph{conjugacy relation} on automorphism groups of numerous countable homogeneous structures. For countable homogeneous digraphs, we were able to decide this complexity in all but three cases, which turned out to be more difficult than the rest. For every digraph that we did analyze, the complexity turned out to be either smooth or Borel complete. In this article we show that in the three remaining cases the conjugacy problems are all Borel complete as well. This completes the proof of the dichotomy:

\begin{thm}
  \label{thm:main}
  If $G$ is a countable homogeneous digraph then the conjugacy problem for $\Aut(G)$ is either smooth or Borel complete.
\end{thm}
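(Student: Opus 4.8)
The plan is to combine Cherlin's classification of the countable homogeneous digraphs \cite{cherlin} with the case analysis already carried out in \cite{summer} and \cite{conjugacy1}. Running down Cherlin's list, every countable homogeneous digraph $G$ other than three structures --- which we recall below and denote, following \cite{cherlin}, by $\mathcal S(2)$, $\mathcal S(3)$, and $\mathcal S$ --- has already been shown there to have $\Aut(G)$-conjugacy either smooth or Borel complete. So Theorem~\ref{thm:main} reduces to the three statements, established in the sections below, that conjugacy is Borel complete in each of $\Aut(\mathcal S(2))$, $\Aut(\mathcal S(3))$, and $\Aut(\mathcal S)$.

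For each of these digraphs $G$ the goal is to exhibit a Borel reduction to $\Aut(G)$-conjugacy from a known Borel-complete isomorphism relation --- isomorphism of countable linear orders, or of countable partial orders \cite{FS}. Given such a countable order $L$ we build, Borel-uniformly in $L$, a copy of $G$ together with a distinguished automorphism $g_L \in \Aut(G)$ whose \emph{orbit structure} --- the partition of $G$ into $\langle g_L \rangle$-orbits together with the $G$-relations holding between these orbits --- encodes $L$: the $\conv$-closed blocks of orbits play the role of the points of $L$, and the decomposition of $\mathcal S(2)$ and $\mathcal S(3)$ into linear and generic constituents via the $\semistar$-operation provides the room needed to lay down the encoding. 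One then verifies the two directions: (i) an isomorphism $L \cong L'$ lifts, by homogeneity of $G$, to an automorphism of $G$ conjugating $g_L$ to $g_{L'}$; and (ii) conversely, any $h \in \Aut(G)$ with $h g_L h^{-1} = g_{L'}$ respects the encoding and hence induces an isomorphism $L \cong L'$.

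Direction (ii) is where the real work lies, and it is exactly the point at which these three digraphs depart from the free-amalgamation digraphs --- the generic digraph and the Henson digraphs --- treated in \cite{conjugacy1}: in $\mathcal S(2)$, $\mathcal S(3)$, and $\mathcal S$ the automorphisms are rigidly constrained (by the cyclic local-order condition in the first two cases, and by the parity condition defining the semigeneric $\mathcal S$), so one cannot simply pad the construction with independent generic vertices to force a conjugating map to preserve our blocks. The remedy is to build the whole encoding out of features that are manifestly invariant under conjugation within $\Aut(G)$ --- the split into finite versus infinite $g_L$-orbits, the sizes of the finite orbits, and the $G$-relations that orbits of a prescribed type can consistently exhibit inside the constraint --- so that any $h$ with $h g_L h^{-1} = g_{L'}$ is forced to carry the $g_L$-encoding onto the $g_{L'}$-encoding block by block; a back-and-forth argument using the homogeneity of $G$ then promotes this correspondence to a genuine isomorphism of $L$ onto $L'$.

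I expect the main obstacle to be reconciling these structural constraints with the need for enough freedom to code. For $\mathcal S(2)$ and $\mathcal S(3)$ one must check that the finite amalgamation diagrams realizing a target orbit pattern never violate the local-order axioms, which forces the coding gadgets to be chosen ``cyclically balanced''; for $\mathcal S$ the parity condition forces the coding blocks to have even $\conv$-size and so adds an extra layer of bookkeeping. Accordingly I would first isolate the needed amalgamation facts as a lemma for each of the three digraphs, and only then assemble the corresponding Borel reductions.
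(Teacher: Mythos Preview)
Your high-level plan --- invoke Cherlin's classification, cite prior work for all but three digraphs, and then prove Borel completeness for the three outstanding cases by a Borel encoding of orders into orbit structure --- matches the paper. The gap is that you have misidentified the three outstanding structures. The digraphs left open after \cite{summer,conjugacy1} are the generic partial order $\mathcal P$, the generic \emph{shuffled} partial order $\mathcal P(3)$, and the semigeneric complete multipartite digraph $\infty\semistar I_\infty$; they are not the local orders $S(2)$ and $S(3)$. In particular $S(3)$ was already handled in \cite[Section~2.2]{conjugacy1} (the paper says so explicitly when introducing $\mathcal P(3)$), so your first two ``remaining'' cases are not remaining at all, and your description of the obstruction as a ``cyclic local-order condition'' is aimed at the wrong target.

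Consequently the specific mechanisms you sketch --- cyclically balanced gadgets to avoid violating the local-order axioms, $\conv$-closed blocks, a $\semistar$-decomposition of $\mathcal S(2)$ and $\mathcal S(3)$ into linear and generic parts --- do not touch the actual difficulties. For $\mathcal P$ the obstacle is the failure of free amalgamation due to \emph{transitivity}: one must show that iteratively adjoining one-point extensions and closing under transitivity never creates antisymmetry failures or new relations in the base (this is the key amalgamation lemma in the paper), and then arrange matters so that the points of $P$ are exactly those whose $\phi_P$-orbit is an antichain while every later-added point has a chain orbit (forced by a constraint $m^{(i)}<x<(m+1)^{(i)}$ against an auxiliary copy of $\ZZ$). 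The $\mathcal P(3)$ case piggybacks on this by working in the three-colored partial order $\mathcal P_3$ and then shuffling. Your semigeneric case is at least pointed at the right structure, but ``even $\conv$-size blocks'' is not the mechanism; the paper instead analyzes the bowtie decomposition of any two antichains into sets $R_{AB},S_{AB}$, normalizes the ambiguity via earliest-added elements, and uses Class~1/Class~2 witnesses together with an auxiliary linear order on types to assign the cross-edges consistently, recovering $L$ as the fixed-point set of $\phi_L$.
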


As we have said, to complete the proof it remains to consider just the three remaining digraphs. These are the generic partial order $\mathcal P$ (Section~2), the generic shuffled partial order $\mathcal P(3)$ (Section~3), and the semigeneric complete multipartite digraph $\infty\semistar I_\infty$ (Section~4). These three proofs showcase many of the tools used in \cite{conjugacy1} together with some additional tricks.

The conclusion of the proof of Theorem~\ref{thm:main} suggests one should next ask about the classification of automorphisms of other countable homogeneous structures. In Section~5, we introduce the class of homogeneous structures with the ($n$-ary) \emph{Borel amalgamation property}, and show how to generalize our methods to apply to such structures.

\textbf{Acknowledgement}. We would like to thank Gregory Cherlin for helpful discussions regarding this material.

\section{The generic partial order}

Let $\mathcal P$ denote the generic countable homogeneous partial order. We refer the reader to \cite{schmerl} for the classification of all homogeneous partial orders. In this section we show that the conjugacy problem for $\Aut(\mathcal P)$ is Borel complete. We begin with the following lemma giving a strong form of the amalgamation property for the class of partial orders.

\begin{lem}
  \label{lem:po-amalg}
  Let $P$ be a partial order and for each $i$ let $P\cup\{a_i\}$ be a partial order extending $P$. Then the transitive closure $P'$ of $P\cup\{a_0,a_1,\ldots\}$ is again a partial order.  Also, $P'$ adds no new relations to $P$.
\end{lem}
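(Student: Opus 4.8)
The plan is to recast the statement in terms of directed graphs. Regard $P$ and each $P\cup\{a_i\}$ as their strict order relations, and let $R$ be the directed graph on $P\cup\{a_0,a_1,\dots\}$ whose edges are the pairs $(x,y)$ with $x<_P y$, together with the pairs $(x,y)$ with $\{x,y\}=\{a_i,p\}$ for some $i$ and some $p\in P$ and $x<_{P\cup\{a_i\}}y$. By construction the transitive closure $P'$ of $P\cup\{a_0,a_1,\dots\}$ is exactly the transitive closure of $R$, and a transitive relation is a strict partial order precisely when it contains no loops, i.e.\ precisely when $R$ has no directed cycle. Likewise, two points $p,q\in P$ satisfy $p<_{P'}q$ if and only if there is a directed $R$-path from $p$ to $q$, so the second assertion amounts to showing that any such path can be replaced by a direct $R$-edge lying in $P$.

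The key observation is that for each $i$ the vertex $a_i$ has $R$-neighbours only in $P$: there are no $R$-edges between $a_i$ and $a_j$ for $i\neq j$, and no loops. Hence whenever a directed $R$-walk passes through an occurrence of $a_i$, that occurrence is flanked by elements of $P$, say $p\to a_i\to q$ with $p,q\in P$. Since $P\cup\{a_i\}$ is transitive and extends $P$, from $p<_{P\cup\{a_i\}}a_i<_{P\cup\{a_i\}}q$ we get $p<_{P\cup\{a_i\}}q$ and therefore $p<_P q$; in particular $p\neq q$, and $(p,q)$ is an $R$-edge lying inside $P$. Thus any occurrence of any $a_i$ in a directed $R$-walk can be bypassed, strictly shortening the walk.

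With this in hand both assertions follow by a minimality argument, and the only real work is the bookkeeping for the bypass operation. For the first, suppose $R$ has a directed cycle and take one of minimal length $n$; since loops are excluded $n\geq 2$, and since $P$ is a partial order the cycle is not contained in $P$, so it meets some $a_i$. If $n=2$ the cycle is $a_i\to p\to a_i$ with $p\in P$, contradicting that $P\cup\{a_i\}$ is a partial order; if $n\geq 3$, bypassing one occurrence of $a_i$ yields a closed directed $R$-walk of length $n-1\geq 2$, which contains a directed cycle of length $<n$, again a contradiction. For the second assertion, given $p<_{P'}q$ with $p,q\in P$, take a shortest directed $R$-path from $p$ to $q$; if it met some $a_i$ it could be bypassed to give a strictly shorter directed $R$-walk from $p$ to $q$, so in fact this shortest path stays inside $P$ and, by transitivity of $P$, witnesses $p<_P q$. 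The degenerate short configurations cause no trouble precisely because each $P\cup\{a_i\}$ is itself assumed to be a partial order. Finally, the very same bypass argument applied to paths with one or both endpoints among the $a_i$ shows that $P'$ restricted to each $P\cup\{a_i\}$ agrees with the given order — the form in which the lemma will actually be used.
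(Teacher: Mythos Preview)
Your proof is correct. The underlying idea is the same as the paper's: since each $a_i$ is related only to elements of $P$, any chain passing through an $a_i$ factors through $P$ and can be collapsed using transitivity of $P$ together with the fact that $P\cup\{a_i\}$ is a partial order. The packaging differs, however. The paper first observes that the transitive closure is reached in a single step (add $a_i\le a_j$ precisely when some $p\in P$ satisfies $a_i\le p\le a_j$) and then verifies transitivity and antisymmetry directly for this explicitly described relation. You never write down the closure; instead you run a graph-theoretic shortest-walk/minimality argument to rule out directed cycles and new $P$-relations. Your route is slightly longer but has the virtue of making the ``no new relations on each $P\cup\{a_i\}$'' clause (which the paper uses later) fall out of the same bypass mechanism, whereas the paper's explicit one-step description makes the structure of $P'$ itself more transparent.
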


\begin{proof}
  First note that the transitive closure is obtained in just one step by adding a relation $a_i\leq a_j$ whenever there is $p\in P$ such that $a_i\leq p\leq a_j$. To verify transitivity holds after performing this step, suppose that $a_i\leq a_j\leq a_k$. Then there exist $p,q\in P$ such that $a_i\leq p\leq a_j\leq q\leq a_k$. Since $P\cup\{a_j\}$ is a partial order, we must have that $p\leq q$. Therefore $a_i\leq p\leq a_k$ and it follows that $a_i\leq a_k$.

  Now suppose towards a contradiction that antisymmetry fails in $P'$. Since we have added no new relations within $P$ or between elements of $P$ and elements of $\{a_i\}$ there must be distinct $i,j$ such that $a_i\leq a_j\leq a_i$. Then there are $p,q\in P$ such that $a_i\leq p\leq a_j\leq q\leq a_i$. It follows that $p\leq q\leq p$, so by antisymmetry in $P$, we have $p=q$. But now $a_i\leq p\leq a_i$, which contradicts antisymmetry in $P\cup\{a_i\}$.
\end{proof}

\begin{thm}
  \label{thm:partial}
  The class of countable partial orders is Borel reducible to the conjugacy relation on $\Aut(\mathcal P)$. Hence the conjugacy relation on $\Aut(\mathcal P)$ is Borel complete.
\end{thm}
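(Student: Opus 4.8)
The plan is to exploit the standard reformulation of conjugacy: two automorphisms $\sigma,\tau\in\Aut(\mathcal P)$ are conjugate if and only if the structures $(\mathcal P,\sigma)$ and $(\mathcal P,\tau)$ — the generic poset together with a distinguished automorphism — are isomorphic. Since isomorphism of countable partial orders is Borel complete \cite{FS}, and since conjugacy on $\Aut(\mathcal P)$ is visibly the isomorphism relation on a Borel class of countable structures (copies of $\mathcal P$ on $\NN$ equipped with an automorphism), hence classifiable by countable structures, it suffices to produce a Borel map $Q\mapsto\sigma_Q\in\Aut(\mathcal P)$ from countable partial orders to automorphisms such that $(\mathcal P,\sigma_Q)\cong(\mathcal P,\sigma_{Q'})$ if and only if $Q\cong Q'$. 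This simultaneously shows that conjugacy on $\Aut(\mathcal P)$ is Borel hard for all isomorphism relations on countable structures and, together with the previous sentence, that it is Borel complete.

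The construction encodes $Q$ as the fixed-point set of $\sigma_Q$. Given a countable partial order $Q$, I would build by recursion an increasing chain of countable partial orders $Q=P_0\subseteq P_1\subseteq\cdots$, each carrying an automorphism $\sigma_s$ fixing $Q$ pointwise, each extending at the next stage, with union $(\mathbf P_Q,\sigma_Q)$. At some stages, to guarantee $\mathbf P_Q\cong\mathcal P$, one realizes a one-point extension type over a finite subset of the current poset; at the remaining stages, to secure uniqueness of the limit (see below), one realizes a prescribed ``orbit type'' over a finite set of previously adjoined elements together with a finitely generated up-set or down-set of $Q$. In every case the new points are adjoined as a full $\ZZ$-orbit of $\sigma_Q$, so that $\sigma_Q$ never acquires a finite orbit other than the fixed points comprising $Q$; and Lemma~\ref{lem:po-amalg} is precisely what certifies that the transitive closure of the enlarged point set is again a partial order in which no relation among old points has changed and $\sigma_Q$ still extends to an automorphism. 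A routine dovetailing makes the recursion effective in $Q$, so $Q\mapsto\sigma_Q$ is Borel, and since $\mathbf P_Q$ is a copy of $\mathcal P$ the automorphism $\sigma_Q$ transfers to $\Aut(\mathcal P)$ in the usual way.

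For the easy direction, any isomorphism $(\mathbf P_Q,\sigma_Q)\to(\mathbf P_{Q'},\sigma_{Q'})$ must carry the set of $\sigma_Q$-fixed points onto the set of $\sigma_{Q'}$-fixed points and preserve $\leq$; since by construction these fixed-point sets are exactly copies of $Q$ and $Q'$, we get $Q\cong Q'$. The content is in the converse: given an isomorphism $h_0\colon Q\to Q'$, extend it by a back-and-forth to an isomorphism $(\mathbf P_Q,\sigma_Q)\to(\mathbf P_{Q'},\sigma_{Q'})$. To match a $\sigma_Q$-orbit $O$ of an element of $\mathbf P_Q$ one needs, inside $\mathbf P_{Q'}$, an orbit with the same internal order relations, the same relations to the finitely many already-matched non-fixed points, and the $h_0$-image of the relation of $O$ to $Q$; the point is that this last relation is always a \emph{finitely generated} up-set or down-set of $Q$ — exactly what the corresponding stages were built to realize — so a suitable orbit exists, and symmetrically on the other side. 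Since every non-fixed point lies in such an orbit, this produces the desired isomorphism, whence $\sigma_Q\sim\sigma_{Q'}$.

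The main obstacle, and the only genuinely delicate point, is this uniqueness argument. The danger is that in the generic filling a new orbit is forced, through transitivity — that is, through Lemma~\ref{lem:po-amalg} — to be comparable to infinitely many elements of $Q$, so that its ``type over $Q$'' is not finitary and cannot obviously be duplicated on the other side of the back-and-forth. The resolution is the observation that these forced comparabilities always constitute a finitely generated up-set or down-set of $Q$: each new point is adjoined with a type over a finite set, and the transitive closure of a finite union of finitely generated cuts is again finitely generated. Hence there are only countably many such cut types, and the recursion can be arranged to anticipate and realize them all. With that in hand the back-and-forth runs routinely, establishing Theorem~\ref{thm:partial} and, in particular, that the conjugacy relation on $\Aut(\mathcal P)$ is Borel complete.
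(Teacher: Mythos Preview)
Your route differs from the paper's in both the encoding and the handling of the forward direction. The paper's main proof does not use fixed points at all: $Q_P^0$ consists of $\ZZ$-many incomparable copies of $P$, permuted cyclically by $\phi_P$, together with $\NN$-many auxiliary $\ZZ$-chains on each of which $\phi_P$ acts as the shift. Every point added at a later stage is required (condition~$(\star)$) to lie strictly between two consecutive elements $m^{(i)}<x<(m+1)^{(i)}$ of one of these chains, which forces its $\phi_P$-orbit to be a chain; the copies of $P$ are then recovered as exactly the elements whose orbit is an antichain. Crucially, the construction is \emph{functorial}---the point added for parameters $(\bar a,\bar b,\bar c)$ is carried, under the extension of any $\alpha\colon P\cong P'$, to the point added for $(\alpha\bar a,\alpha\bar b,\alpha\bar c)$---so $P\cong P'\Rightarrow\phi_P\sim\phi_{P'}$ is immediate with no back-and-forth. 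The paper's alternate proof in Section~5 is closer in spirit to yours ($Q_P^0=P$, $\phi_P^0=\mathrm{id}$, recovery via fixed points) but again replaces back-and-forth by functoriality, packaged as a canonical extension operator $E$ witnessing $\text{BAP}_2$.

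Your back-and-forth is where the sketch is thin. The observation that each new point's relation to $Q$ is a finitely generated cut is correct and handles the part of an orbit type living over the fixed set. But an orbit type over finitely many already-matched orbits $O_1,\ldots,O_k$ also records the full shift-invariant pattern of relations between the new $\ZZ$-orbit and each $\ZZ$-indexed $O_i$, as well as the internal order structure of the new orbit itself; both are shaped by transitive closure through earlier stages in ways you have not controlled. For the back-and-forth to succeed you must show that only countably many such orbit types ever arise in $(\mathbf P_Q,\sigma_Q)$ and that your ``remaining stages'' realize all of them---in effect, that the limit is homogeneous in the language expanded by a symbol for $\sigma$. That can likely be pushed through, but it is real work you have not supplied; the functorial route in the paper sidesteps it entirely and gets the forward direction for free.
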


\begin{proof}
  Given a countable partial order $P$, we build a copy $Q_P$ of $\mathcal P$ and an automorphism $\phi_P$ of $Q_P$ in such a way that $P\cong P'$ if and only if $\phi_P$ is conjugate to $\phi_{P'}$. This will be done in stages $Q_P^n$, $\phi_P^n$. To begin, let $Q_P^0$ consist of $\ZZ$ many incomparable copies of $P$ together with $\NN$ many copies of $\ZZ$. Denote these latter copies $\ZZ^{(i)}=\{m^{(i)}\mid m\in\ZZ\}$ for each $i\in\NN$. Let $\phi_P^0$ act on $Q_P^0$ by sending the $i$th copy of $P$ to the $(i+1)$st copy of $P$, and sending $m^{(i)}$ to $(m+1)^{(i)}$.

  Assume that $Q_P^n$ and $\phi_P^n$ have been constructed, and construct $Q_P^{n+1}$ and $\phi_P^{n+1}$ as follows. Begin by considering each set of constraints of the form $\bar a<x<\bar b$ and $x\perp \bar c$ which are consistent with the axioms of a partial order and the diagram of $Q_P^n$. (For example, we assume that $\bar a<\bar b$, $\bar c\not\leq\bar a$, and so forth.) Additionally assume
  \begin{itemize}
  \item[($\star$)] the constraints contain relations the form $m^{(i)}<x<(m+1)^{(i)}$.
  \end{itemize}
  For each such set of constraints, we add a new realization $x$ to $Q_P^{n+1}$. We do not add any relations involving $x$ except those implied by the constraints and transitivity. Then, we close $Q_P^{n+1}$ under transitivity. By Lemma~\ref{lem:po-amalg}, we have added no new edges to $Q_P^n$.

  We also extend $\phi_P^n$ to an automorphism $\phi_P^{n+1}$ of $Q_P^n$ in the obvious way: If $x$ is the point corresponding to the parameters $\bar a,\bar b,\bar c$, then we let $\phi_P^{n+1}(x)$ be the point corresponding to the parameters $\phi_P^n(\bar a),\phi_P^n(\bar b),\phi_P^n(\bar c)$.

  We claim that any element of $Q_P$ is related to just finitely many copies of $\ZZ$ in $Q_P^0$. Clearly this claim holds for elements of $Q_P^0$ itself. Next assume that the claim holds for elements of $Q_P^n$ and consider constraints of the form $\bar a<x<\bar b$ and $x\perp\bar c$ with parameters from $Q_P^n$. By hypothesis the elements $\bar a,\bar b,\bar c$ are related to just finitely many copies of $\ZZ$ in $Q_P^0$ among them all. Adding an element $x$ satisfying this constraint (as done above) and closing under transitivity does not result in $x$ being related to any additional copies of $\ZZ$. This completes the proof of the claim.

  Now to see that $Q_P$ satisfies the one-point extension property, let $\bar a<x<\bar b$ and $x\perp\bar c$ be an arbitrary set of constraints consistent with the axioms of a partial order. Let $Q_P^n$ be the least level containing all of the parameters $\bar a,\bar b,\bar c$. By the claim, these parameters are related to just finitely many copies of $\ZZ$ in $Q_P^0$ among them all. By the argument of the claim, it is possible to add a realization $x$ to $Q_P^n$ which is related just to these finitely many copies of $\ZZ$ in $Q_P^0$. Let $\ZZ^{(i)}$ be the first copy of $\ZZ$ in $Q_P^0$ not related to $x$. We now consider the constraints $\bar a<y<\bar b$ and $y\perp\bar c$ and $0^{(i)}<y<1^{(i)}$. This extended set of constraints is consistent and of the form $(\star)$. Hence in the construction we have placed a realization $y$ of the extended constraints into $Q_P^{n+1}$. This completes the verification that $Q_P$ satisfies the one-point extension property.

  Towards a conclusion, observe that for every $x$ in a copy of $P$ in $Q_P^0$ we have that the $\phi_P$-orbit of $x$ is an antichain. On the other hand for every other element $x$ we have that the $\phi_P$-orbit of $x$ is a chain. This is because we have some constraint of the form $0^{(i)}<x<1^{(i)}$. This implies $1^{(i)}=\phi(0^{(i)})<\phi(x)$, and the two together imply that $x<\phi(x)$.

  Thus we can recover the copies of $P$ in $Q_P^0$ as the set of points whose $\phi_P$-orbit is an antichain, and then further recover $P$.
  Hence if $\alpha\colon Q_P \cong Q_{P'}$ and $\alpha\phi_P=\phi_{P'}\alpha$ it follows that $\alpha$ restricts to an isomorphism $Q_P^0 \cong Q_{P'}^0$ that sends $\phi_P$-orbits to $\phi_{P'}$-orbits. Therefore by passing to the quotient graphs of $Q_P^0,Q_{P'}^0$ by the $\phi_P$ and $\phi_{P'}$-orbit equivalence relations, we see that $\alpha$ induces an isomorphism  $P\cong P'$.

  To conclude, we remark briefly on how the construction can be exhibited in a Borel fashion. We fix the underlying sets of $P,Q_P,\mathcal{P}$ to be $\NN$. The construction of $Q_P$ can be made Borel by reserving an infinite subset $I_n\subset\NN$ for each $Q_P^n$, and using a previously fixed enumeration of the finite subsets $S\subset I_k$. This immediately implies that the construction of $\phi_P$ is Borel also. Finally we can regard $\phi_P$ as an automorphism of $\mathcal{P}$ using a back-and-forth construction between $Q_P$ and $\mathcal{P}$, where each choice in the construction is resolved by choosing the least available witness.
\end{proof}

\section{The generic shuffled partial order}

The generic shuffled partial order, denoted $\mathcal P(3)$, is a graph obtained by ``shuffling'' three disjoint dense subsets of $\mathcal P$ in the following fashion.

\begin{defn}
  \label{defn:p3}
  Let $\mathcal P=P_0\sqcup P_1\sqcup P_2$ be a partition of $\mathcal P$ into three dense subsets. Define the shuffled graph $\mathcal P(3)$ on the underlying set of $\mathcal P$ as follows. First, if $x,y\in P_i$, then set $x\to_{\mathcal P(3)}y\iff x<_{\mathcal P}y$. Next for each $i\in\ZZ/3\ZZ$, if $x\in P_i$ and $y\in P_{i+1}$ then set
\begin{align*}
x\to_{\mathcal P(3)} y &\iff x>_{\mathcal P} y\\
x\leftarrow_{\mathcal P(3)} y &\iff x\perp_{\mathcal P} y\\
x\perp_{\mathcal P(3)} y &\iff x<_{\mathcal P} y
\end{align*}
\end{defn}

See Chapter~5 of \cite{cherlin} for the proof that $\mathcal P(3)$ is homogeneous. We remark that the construction of $\mathcal P(3)$ is similar to that of the digraph $S(3)$, which is obtained by shuffling three disjoint dense subsets of $\QQ$. See Section~2.2 of \cite{conjugacy1} for our treatment of $S(3)$.

The argument of the previous section can be modified to show that the conjugacy problem for $\Aut(\mathcal P(3))$ is again Borel complete. In the proof we will let $\mathcal P_3$ denote the generic three-colored partial order. (Finite three-colored partial orders form an amalgamation class.) The structure $\mathcal P_3$ can be viewed simply as a copy of $\mathcal P$ partitioned into three distinguished dense subsets $P_0,P_1,P_2$.

\begin{thm}
  The isomorphism relation on countable partial orders is Borel reducible to the conjugacy relation on $\Aut(\mathcal P(3))$. Hence the conjugacy relation on $\Aut(\mathcal P(3))$ is Borel complete.
\end{thm}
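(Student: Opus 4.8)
The plan is to adapt the construction from the proof of Theorem~\ref{thm:partial}, working with the generic three-colored partial order $\mathcal P_3$ in place of $\mathcal P$ and then transporting everything to $\mathcal P(3)$ via the shuffling operation of Definition~\ref{defn:p3}. Given a countable partial order $P$, I would build a copy $\hat Q_P$ of $\mathcal P_3$ together with a color-preserving automorphism $\hat\phi_P$, in stages $\hat Q_P^n,\hat\phi_P^n$, exactly as before. Let $\hat Q_P^0$ consist of $\ZZ$ many pairwise incomparable copies of $P$, all placed in color $0$, together with, for each color $c\in\ZZ/3\ZZ$, $\NN$ many pairwise incomparable copies $\ZZ^{(i,c)}$ of the chain $\ZZ$ in color $c$; let $\hat\phi_P^0$ shift the copies of $P$ and act by $m^{(i,c)}\mapsto(m+1)^{(i,c)}$ on the marker chains. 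At stage $n+1$, for every set of color-labeled constraints $\bar a<x<\bar b$, $x\perp\bar c$ together with a specified color $c$ for $x$, consistent with the axioms of a partial order and the diagram of $\hat Q_P^n$ and additionally containing a marker constraint $m^{(i,c)}<x<(m+1)^{(i,c)}$ whose chain has the same color $c$ as $x$, I would add a realization and close under transitivity; by Lemma~\ref{lem:po-amalg}, applied to the underlying partial order, this adds no new relations to $\hat Q_P^n$. Define $\hat\phi_P^{n+1}$ on the new points by shifting parameters, just as in Theorem~\ref{thm:partial}. Finally let $Q_P$ be the digraph obtained from $\hat Q_P$ by the shuffle of Definition~\ref{defn:p3}, and let $\phi_P$ be the corresponding permutation; since $\hat\phi_P$ preserves colors and the shuffle is defined color-locally, $\phi_P$ is an automorphism of $Q_P$.

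Then I would check the same three points as in Theorem~\ref{thm:partial}. First, that every element of $\hat Q_P$ is related to only finitely many marker chains, by the identical induction on stages. Second, that $Q_P\cong\mathcal P(3)$: for this it is enough to verify the one-point extension property for $\mathcal P_3$, so given an arbitrary consistent color-labeled constraint $\bar a<x<\bar b$, $x\perp\bar c$ with color $c$ for $x$, one uses the finiteness claim to find a chain $\ZZ^{(i,c)}$ of color $c$ unrelated to the parameters, adjoins $0^{(i,c)}<x<1^{(i,c)}$, notes the enlarged constraint is consistent and of the required form, hence realized in the construction; since $\mathcal P(3)$ is the shuffle of $\mathcal P_3$ (see~\cite{cherlin}), $Q_P$ is a copy of $\mathcal P(3)$. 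Third, that $P$ is recoverable from $(Q_P,\phi_P)$ in a digraph-intrinsic way: if $x$ lies in a copy of $P$ then its $\hat\phi_P$-orbit lies in color $0$ and is an antichain in $\mathcal P_3$ because the copies of $P$ are mutually incomparable, whereas every other point satisfies some $m^{(i,c)}<x<(m+1)^{(i,c)}$, which forces $x<_{\mathcal P_3}\hat\phi_P(x)$, so its orbit is a chain. Because $\hat\phi_P$ preserves colors and the shuffle preserves within-color $\mathcal P_3$-relations, in $Q_P$ the copies of $P$ are exactly the points with antichain $\phi_P$-orbit, and the $\mathcal P(3)$-digraph induced on such a copy --- all of whose points have color $0$ --- is literally $P$.

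With the recovery in hand, the reduction goes through as in Theorem~\ref{thm:partial}: if $\alpha\colon Q_P\cong Q_{P'}$ satisfies $\alpha\phi_P=\phi_{P'}\alpha$, then $\alpha$ maps the antichain-orbit points onto the antichain-orbit points and $\phi_P$-orbits onto $\phi_{P'}$-orbits, so on passing to the quotients by the orbit equivalence relations it induces an isomorphism $P\cong P'$; conversely an isomorphism $P\cong P'$ lifts stagewise through the canonical construction to a color-preserving isomorphism $\hat Q_P\cong\hat Q_{P'}$ commuting with $\hat\phi$, and hence via the shuffle to a conjugacy of $\phi_P$ with $\phi_{P'}$ in $\Aut(\mathcal P(3))$. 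Borelness of the assignment $P\mapsto(Q_P,\phi_P)$, regarded as an automorphism of a fixed copy of $\mathcal P(3)$, is arranged exactly as at the end of the proof of Theorem~\ref{thm:partial}. I expect the main obstacle to be the color bookkeeping in the middle step: one must ensure simultaneously that every point lying outside the copies of $P$ acquires a marker of its own color (so that its orbit becomes a chain), that the copies of $P$ keep antichain orbits, and --- crucially --- that this chain/antichain dichotomy survives the shuffle, which is precisely why the marker attached to a point is required to have the same color as the point. Everything else is a routine transcription of Section~2 with colors carried along.
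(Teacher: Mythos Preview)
Your proposal is correct and follows essentially the same route as the paper: build a copy of $\mathcal P_3$ by the marker-chain construction of Theorem~\ref{thm:partial}, show that the antichain/chain dichotomy on $\phi_P$-orbits survives the shuffle, and recover $P$ from the antichain-orbit points. The one difference is that you introduce marker chains of all three colors and insist that the marker attached to $x$ share the color of $x$; the paper instead places \emph{all} of $Q_P^0$---copies of $P$ and marker chains alike---in color~$0$. Your ``crucial'' requirement is in fact unnecessary: since $\hat\phi_P$ is color-preserving, every orbit is monochromatic, and on a monochromatic set the shuffle of Definition~\ref{defn:p3} is the identity, so $x<_{\mathcal P_3}\hat\phi_P(x)$ already forces $x\to_{\mathcal P(3)}\phi_P(x)$ regardless of the marker's color.
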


\begin{proof}
  Given a partial order $P$ we modify the proof of Theorem~\ref{thm:partial} to build a copy $Q_P$ of $\mathcal P_3$ as follows. The first level $Q_P^0$ is constructed as before, with all vertices of $Q_P^0$ of color $0$. When constructing $Q_P^{n+1}$ we again add elements $x$ satisfying each admissible constraint; in this way elements $x$ of all three colors will be added. Continuing the construction as before, we obtain a structure $Q_P$ which is a copy of $\mathcal P_3$, and an automorphism $\phi_P$ of $Q_P$. It is again the case that $x\in Q_P^0$ if and only if the $\phi_P$-orbit of $x$ is an antichain.

  The structure $Q_P$ gives rise to a corresponding copy $Q_P(3)$ of $\mathcal P(3)$ obtained by shuffling the colors according to the rules in Definition~\ref{defn:p3}. Since $\phi_P$ preserves the colors of $Q_P$, it is easy to see that $\phi_P$ is an automorphism of $Q_P(3)$ too.

  For the forward implication of the Borel reduction, beginning with an isomorphism $\alpha\colon P\cong P'$ it gives rise to an automorphism of the first level $Q_P^0$ (which we recall was monochromatic in color $0$). This then extends naturally to a color-preserving automorphism of all $Q_P$, and hence to an automorphism of $Q_P(3)$. The resulting extension $\bar\alpha$ conjugates $\phi_P$ to $\phi_{P'}$.

  For the converse implication, we note that since all the $\phi_P$-orbits are monochromatic, they retain their structure even after the shuffle. That is, antichain orbits remain antichain orbits, and non-antichain orbits remain non-antichain orbits. Thus given $\phi_P$ we can recover a copy of $Q_P^0$ as the set of antichain orbits and conclude as in the proof of Theorem~\ref{thm:partial}.
\end{proof}

\section{The semigeneric complete multipartite digraph}

We say a digraph is \emph{complete multipartite} if its $\perp$ relation is an equivalence relation. The class of finite such graphs forms an amalgamation class, and we write $\infty*I_\infty$ for the countable generic complete multipartite graph. In this section we consider the following ``semigeneric'' variant of $\infty*I_\infty$. Let $\mathcal C$ be the class of finite complete multipartite graphs which additionally satisfy the following \emph{parity property}: for every two maximal antichains $A$ and $B$ and every distinct $a,a'\in A$ and distinct $b,b'\in B$ there is an even number of edges pointing from the set $\{a,a'\}$ to the set $\{b,b'\}$. This is once again an amalgamation class, as shown in \cite{cherlin-imprimitive}. We let $\infty\semistar I_\infty$ denote the countable generic graph corresponding to the class $\mathcal C$.

In \cite{conjugacy1} we showed that the conjugacy problem for $\Aut(\infty*I_\infty)$ is Borel complete. In this section we show that the conjugacy problem for $\Aut(\infty\semistar I_\infty)$ is again Borel complete. We remark that the argument for completeness of $\Aut(\infty*I_\infty)$ cannot be used directly for $\Aut(\infty\semistar I_\infty)$, since the widget used in the proof did not have the parity property (see Figure~2 of \cite{conjugacy1}). The following lemma will help us understand the structure $\infty\semistar I_\infty$ and its automorphisms.

\begin{lem}
  \label{lem:semigeneric-structure}
  Let $G$ be a complete multipartite digraph with the parity property, and suppose $A, B$ are two maximal antichains. There exists subsets $R_{AB}\subset A$ and $S_{AB}\subset B$ such that for $x\in A$ and $y\in B$, we have $x\to y$ if and only if we have that $x\in R_{AB}\iff y\in S_{AB}$. Refer to Figure~\ref{fig:semigen} for a diagram of these relationships.
\end{lem}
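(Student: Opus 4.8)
The plan is to show that the parity property forces the orientation between $A$ and $B$ to be governed by a single ``cut'': the $\{0,1\}$-valued function recording the direction of each $A$--$B$ edge is, modulo $2$, a sum of a function of its first coordinate and a function of its second. Two preliminary reductions dispose of degenerate cases. First, since $A$ and $B$ are maximal antichains they are exactly the classes of the equivalence relation $\perp$, so if $A\ne B$ then for every $x\in A$ and $y\in B$ precisely one of $x\to y$ and $y\to x$ holds; and if $A=B$ there are no $A$--$B$ edges at all, in which case $R_{AB}=A$, $S_{AB}=\emptyset$ works trivially. Second, if $B=\{b\}$ is a singleton one takes $R_{AB}=\set{x\in A:x\to b}$ and $S_{AB}=\{b\}$, and symmetrically if $A$ is a singleton. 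So I would assume from now on that $A\ne B$ and $|A|,|B|\ge 2$.

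First I would fix a basepoint $a_0\in A$ and set $S_{AB}=\set{y\in B:a_0\to y}$. The heart of the argument is the claim that every $x\in A$ either \emph{agrees} with $a_0$ everywhere on $B$ — meaning $x\to y\iff a_0\to y$ for all $y\in B$ — or \emph{disagrees} with $a_0$ everywhere, meaning $x\to y\iff a_0\not\to y$ for all $y\in B$. To see this, fix $x\in A$ with $x\ne a_0$ and distinct $y,y'\in B$. All four of the pairs in $\{a_0,x\}\times\{y,y'\}$ are $A$--$B$ edges, so the parity property applied to $a_0,x,y,y'$ says that an even number of them point from $A$ to $B$. Writing this out modulo $2$ and rearranging, it is exactly the assertion that $x$ agrees with $a_0$ at $y$ if and only if $x$ agrees with $a_0$ at $y'$. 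Since $|B|\ge 2$, this shows the agreement status of $x$ relative to $a_0$ is the same at every point of $B$, which is the claim.

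Given the claim, I would define $R_{AB}=\set{x\in A:x\text{ agrees with }a_0\text{ everywhere on }B}$; note $a_0\in R_{AB}$. To finish I would check the two cases. If $x\in R_{AB}$ then for every $y\in B$ we have $x\to y\iff a_0\to y\iff y\in S_{AB}$, and since $x\in R_{AB}$ the right-hand side is equivalent to $(x\in R_{AB}\iff y\in S_{AB})$. If $x\notin R_{AB}$ then by the claim $x\to y\iff a_0\not\to y\iff y\notin S_{AB}$, and since $x\notin R_{AB}$ this too is equivalent to $(x\in R_{AB}\iff y\in S_{AB})$. This establishes the lemma.

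I do not expect any serious obstacle here: the content is essentially the standard fact that an $\mathbb F_2$-valued function $e$ on a product $A\times B$ with $e(a,b)+e(a,b')+e(a',b)+e(a',b')=0$ for all $a,a',b,b'$ has the form $e(a,b)=f(a)+g(b)$, and one could alternatively prove the lemma by quoting that fact and taking $R_{AB}$, $S_{AB}$ to be suitable level sets of $f$ and $g$. The only genuine care needed is bookkeeping the directions of the nested biconditional and checking that the degenerate cases ($A=B$, or $A$ or $B$ a singleton) really do fall under the scheme above; the choice of basepoint $a_0$ is immaterial, since only the existence of $R_{AB}$ and $S_{AB}$ is asserted.
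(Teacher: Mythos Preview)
Your proof is correct and follows essentially the same approach as the paper: fix a basepoint, apply the parity property to a $2\times2$ configuration, and read off $R_{AB}$ and $S_{AB}$. The only organizational difference is that the paper fixes two basepoints $x_0\in A$, $y_0\in B$ with $x_0\to y_0$ and defines $R_{AB},S_{AB}$ symmetrically (requiring a small case split when no such edge exists), whereas you fix a single basepoint $a_0\in A$ and phrase things via ``agreement with $a_0$,'' which sidesteps that split and makes the $\mathbb F_2$-bilinear structure more visible.
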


\begin{figure}[h]
  \begin{tikzpicture}
    \node (a) at (0,0) {$R_{AB}$};
    \node (b) at (2,0) {$S_{AB}$};
    \node (c) at (0,-1) {$R_{AB}^c$};
    \node (d) at (2,-1) {$S_{AB}^c$};
    \draw[->,double] (a) -- (b);
    \draw[->,double] (b) -- (c);
    \draw[->,double] (c) -- (d);
    \draw[->,double] (d) -- (a);
  \end{tikzpicture}
  \caption{The edge relationships between the sets $R_{AB}$, $S_{AB}$, and their complements $R_{AB}^c=A\smallsetminus R_{AB}$ and $S_{AB}^c=B\smallsetminus S_{AB}$.\label{fig:semigen}}
\end{figure}
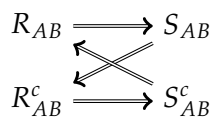

\begin{proof}
  Let $A, B$ be two maximal antichains and, if it is possible, fix $x_0\in A$ and $y_0\in B$ such that $x_0\to y_0$. We then let $R_{AB}=\set{x\in A\mid x\to y_0}$ and let $S_{AB}=\set{y\in B\mid x_0\to y}$. We claim these sets have the desired properties. Indeed, applying the parity property to the pairs $\{x_0,x\}$ and $\{y_0,y\}$ one can conclude that $x\to y$ in the cases when $x\in R_{AB}$ and $y\in S_{AB}$ or else $x\notin R_{AB}$ and $y\notin S_{AB}$. Similarly one can conclude that $x\leftarrow y$ in the cases when $x\in R_{AB}$ and $y\notin S_{AB}$ or else $x\notin R_{AB}$ and $y\in S_{AB}$, as desired.

  If it is not possible to pick $x_0,y_0$ above, then instead pick $x_0\in A$ and $y_0\in B$ with $x_0\leftarrow y_0$ and proceed similarly to find $R_{BA}$ and $S_{BA}$. Then simply let $R_{AB}:=S_{BA}$ and $S_{AB}:=R_{BA}^c$.
\end{proof}

\begin{thm}
  The isomorphism relation for countable linear orders is Borel reducible to the conjugacy problem for $\Aut(\infty\semistar I_\infty)$. Hence the conjugacy problem for $\Aut(\infty\semistar I_\infty)$ is Borel complete.
\end{thm}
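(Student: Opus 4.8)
The plan is to adapt the template of the proof of Theorem~\ref{thm:partial}. Given a countable linear order $L$, I would construct in stages $Q_L^n,\phi_L^n$ a copy $Q_L$ of $\infty\semistar I_\infty$ together with an automorphism $\phi_L$, arranged so that $L\cong L'$ if and only if $\phi_L$ is conjugate to $\phi_{L'}$. For the base stage I would let $Q_L^0=\{c_q:q\in L\}$ consist of one point $c_q$ in its own part for each $q\in L$, with edges given by $c_q\to c_{q'}\iff q<_L q'$, and let $\phi_L^0$ be the identity on $Q_L^0$. Since every part of $Q_L^0$ is a singleton, the parity property is vacuous, so $Q_L^0$ lies in the amalgamation class $\mathcal{C}$ and $\phi_L^0$ is an automorphism of it.

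Next I would run the usual recursion producing the generic structure: at stage $n+1$, over every finite parameter set in $Q_L^n$, realize every one-point extension type permitted by $\mathcal{C}$, and extend $\phi_L^n$ to $\phi_L^{n+1}$ equivariantly as in that argument. The new ingredient is that, instead of adding a single realization of each extension type, I would always add an entire infinite $\phi_L$-orbit of realizations. This is automatic for any type whose $\phi_L^n$-orbit is already infinite; the only types with finite $\phi_L^n$-orbit are certain ones over parameter sets contained in $\{c_q:q\in L\}$, and these I would handle by hand, placing the realizations either in an infinite orbit of new singleton parts (where the parity property is vacuous) or in an infinite $\phi_L$-orbit inside an existing part of some $c_q$, with the relevant sets of Lemma~\ref{lem:semigeneric-structure} chosen equivariantly. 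Carried out this way, the recursion never creates a new $\phi_L$-fixed part nor a new $\phi_L$-fixed point, while still realizing every one-point extension type; hence $Q_L$ has the one-point extension property and lies in $\mathcal{C}$, so $Q_L\cong\infty\semistar I_\infty$. Moreover the $\phi_L$-fixed parts of $Q_L$ are exactly the (now infinite) parts $A_q$ containing the points $c_q$, and $c_q$ is the unique $\phi_L$-fixed point of $A_q$.

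For the forward implication, an isomorphism $\alpha\colon L\cong L'$ induces an isomorphism $Q_L^0\cong Q_{L'}^0$ carrying $c_q$ to $c_{\alpha(q)}'$ and commuting with the trivial base automorphisms, and a routine back-and-forth (using that the recursion is canonical in its input and that $\infty\semistar I_\infty$ is homogeneous) extends this to an isomorphism $Q_L\cong Q_{L'}$ conjugating $\phi_L$ to $\phi_{L'}$. For the converse, if $\bar\alpha\colon Q_L\cong Q_{L'}$ conjugates $\phi_L$ to $\phi_{L'}$, then $\bar\alpha$ carries $\phi_L$-fixed parts to $\phi_{L'}$-fixed parts and $\phi_L$-fixed points to $\phi_{L'}$-fixed points, hence induces a bijection $\beta\colon L\to L'$ with $\bar\alpha(A_q)=A'_{\beta(q)}$ and $\bar\alpha(c_q)=c'_{\beta(q)}$. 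Since $\bar\alpha$ preserves edges and $c_q\to c_{q'}\iff q<_L q'$, the bijection $\beta$ is an isomorphism of linear orders, so $L\cong L'$. Finally, as in the proof of Theorem~\ref{thm:partial}, fixing the underlying sets and all enumerations in advance makes the construction Borel, and composing with a back-and-forth identification of $Q_L$ with $\infty\semistar I_\infty$ (resolving each choice by the least available witness) exhibits $\phi_L$ as a Borel function of $L$ into $\Aut(\infty\semistar I_\infty)$. As the isomorphism relation on countable linear orders is Borel complete, this yields the theorem.

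I expect the main obstacle to be the tension between rigidity and genericity in the recursion: forcing each $c_q$ to be $\phi_L$-fixed forces $\phi_L$ to preserve setwise each block of the canonical partitions of $A_q$ provided by Lemma~\ref{lem:semigeneric-structure}, and one must check that enough freedom survives --- after also insisting that realizations be spread over infinite $\phi_L$-orbits --- to still realize every one-point extension type, so that $Q_L$ really is $\infty\semistar I_\infty$ rather than a proper substructure. Verifying this, and keeping the equivariant bookkeeping of the amalgamation choices straight via Lemma~\ref{lem:semigeneric-structure}, is where I expect the real work to lie; if the bare construction above does not leave enough room, one could instead graft on an auxiliary anchor of the kind used in the proof of Theorem~\ref{thm:partial}.
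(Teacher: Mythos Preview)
Your outline shares the paper's overall architecture: start with $G_L^0=L$ as a tournament of singleton parts with $\phi_L^0$ the identity, build up to a copy of $\infty\semistar I_\infty$ while keeping all newly added points non-fixed, and recover $L$ as the fixed-point set of $\phi_L$. So the skeleton is right, and your identification of the hard part---making the amalgamation choices both equivariant and Borel while preserving parity---is exactly on target.

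Where your plan diverges from the paper, and where it is genuinely incomplete, is in the mechanism for the new points. You propose adding an \emph{infinite} $\phi_L$-orbit of realizations for each admissible type and then handling the edges ``by hand'' or ``equivariantly''. The paper instead adds exactly \emph{three} witnesses $a_k(S),b_k(S),c_k(S)$ for each type and lets $\phi_L^{n+1}$ act as the natural extension composed with the $3$-cycle $a\mapsto b\mapsto c\mapsto a$. This finite cycling is what makes the bookkeeping tractable: there are two classes of types (Class~1: $x$ forced into an existing antichain, so the three witnesses are mutually $\perp$; Class~2: $x$ lies in a fresh singleton part, so the three witnesses sit in a directed $3$-cycle), and the edges are then specified by two explicit devices you do not have. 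First, the ambiguity in Lemma~\ref{lem:semigeneric-structure} is resolved once and for all by decreeing that $R_{AB}$ contains the \emph{earliest-added} element $e_A$ of $A$; new Class~1 points are then placed in $R_{AB}$ by default unless their type forbids it. Second---and this is the real trick---the paper carries along an auxiliary linear order $<_{n+1}$ on all of $G_L^{n+1}$, extending the given order on $L$ lexicographically through the type indices and parameter tuples, and orients all edges between distinct Class~2 triples according to this order. This is precisely where the hypothesis that $L$ is a \emph{linear} order gets used: it supplies a canonical, Borel, $\phi_L$-equivariant way to orient edges between new singleton parts, which the parity property alone does not determine.

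Your infinite-orbit approach does not obviously provide a substitute for this device. With $\ZZ$-orbits of singleton parts you must orient edges between every pair of such orbits in a shift-invariant, Borel way, and then propagate this coherently through all later stages; you gesture at this but do not supply a mechanism, and your closing remark that one might need to ``graft on an auxiliary anchor'' is essentially an admission that the construction as stated may not close up. The paper's solution is exactly such an anchor, but realized as a global linear order threaded through the entire recursion rather than as a separate gadget. I would recommend replacing your infinite orbits with finite cyclic witness sets and introducing the inherited linear order explicitly; once you do, the verification that $\phi_L$ has no new fixed points and that the edge assignments respect parity becomes a direct check.
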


\begin{proof}
  Given a countable linear order $L$ we will recursively construct a copy $G_L$ of $\infty\semistar I_\infty$ together with an automorphism $\phi_L$ of $G_L$ such that $L\cong L'$ if and only if $\phi_L$ is conjugate to $\phi_{L'}$. To begin we let $G_L^0=L$ itself, and $\phi_L^0=$ the identity on $G_L^0$. We remark that $G_L^0$ is multipartite with parts of size $1$, and therefore it has the parity property. In the construction we will also require a linear ordering $<_L^n$ on $G_L^n$, and we initially set $<_L^0$ equal to the given ordering of $L$.

  For the remainder of the proof we fix an enumeration $\tau_k(x,\bar{y})$ of the types of the theory of digraphs.

  Assuming $G_L^n$, $\phi_L^n$, $<_L^n$ have been constructed, we construct $G_L^{n+1}$, $\phi_L^{n+1}$, $<_L^{n+1}$ as follows. We consider in turn each pair $k\in\mathbb{N}$ and $S\in(G_L^n)^{<\infty}$ such that the parameterized type $\tau_k(x,S)$ does not contradict the parity property. For each such pair, we put three new points $a_k(S),b_k(S),c_k(S)$ into $G_L^{n+1}$ satisfying $\tau_k(x,S)$.

  If $\tau_k(x,S)$ forces $x$ to be in a maximal antichain $A$ of $G_L^n$, then we will place these new points into $A$. Otherwise we will create three new antichains with one point each. Formally, if $\tau_k(x,S)$ contains a formula of the form $x\perp s$, we set $a_k(S)\perp b_k(S)\perp c_k(S)\perp a_k(S)$. For future reference, let us say say that the type $\tau_k(x,S)$ and the points $a_k(S),b_k(S),c_k(S)$ added in this manner are of \emph{Class~1}. On the other hand, if $\tau_k(x,S)$ does not contain a formula of the form $x\perp s$, then we set $a_k(S)\to b_k(S)\to c_k(S)\to a_k(S)$. We say that the type $\tau_k(x,S)$ and the points $a_k(S),b_k(S),c_k(S)$ added in this manner are of \emph{Class~2}.

  Before we describe the rest of the edges of $G_L^{n+1}$, let us use Lemma~\ref{lem:semigeneric-structure} to define the sets $R_{AB}$ and $S_{AB}$ for every pair of maximal antichains $A,B$ of $G_L^n$. Note that there is an ambiguity in the lemma, since $R_{AB}$ and $S_{AB}$ may be swapped with their complements. To make the definition uniform throughout our construction, note that each maximal antichain has an element $e_A$ which was added \emph{earliest}. We always choose $R_{AB}$ so that it contains this element $e_A$.

  Now given any element $a$ of Class~1, $a$ was added to some maximal antichain $A$. For each other maximal antichain $B$, we additionally specify that $a$ lies in $R_{AB}$ unless its type $\tau_K(S)$ explicitly forces us to put $a$ into $R_{AB}^c$. This specification determines the remaining edges between elements of Class~1 and elements of $G_L^n$, and also between any two elements of Class~1.

  Next given an element $a$ of Class~2, $a$ lies in its own maximal antichain $A=\{a\}$. For each other element $b$ of $G_L^n$ or of Class~1 we set $a\to b$ unless the type of $a$ explicitly forces us to set $a\leftarrow b$.

  It remains only to define the edges between two elements of Class~2. For this we will need to define the linear ordering $<_{n+1}$. First let $\prec_n$ be the lexicographic ordering of $(G_L^n)^{<\infty}$ inherited from $<_n$. We then make the definitions:
  \begin{itemize}
  \item If $d\in G_L^n$ and $d'\in G_L^{n+1}\smallsetminus G_L^n$, set $d<_{n+1}d'$
  \item If $k<k'$, set $a_k(S),b_k(S),c_k(S)<_{n+1}a_{k'}(S), b_{k'}(S)c_{k'}(S)$
  \item If $k\in\mathbb{N}$ and $S\prec_n S'$, set $a_k(S),b_k(S),c_k(S)<_{n+1} a_k(S'), b_k(S'),c_k(S')$
  \item If $k\in\mathbb{N}$ and $S\in(G_L^n)^{<\infty}$, set $a_k(S)<_{n+1} b_k(S)<_{n+1} c_k(S)$
  \end{itemize}
  Now if $\tau_k(S)$ and $\tau_{k'}(S')$ are two types of Class~2, we set $a_k(S),b_k(S),c_k(S)\rightarrow a_{k'}(S'),b_{k'}(S'),c_{k'}(S')$ precisely when $a_k(S)<_{n+1}a_{k'}(S')$.

  Finally we extend the automorphism $\phi_L^n$ of $G_L^n$ to an automorphism of $G_L^{n+1}$. Given a type $\tau_k(x,S)$ of the form considered above, we let $S'=\phi_n(S)$. Then we let $\phi_L^{n+1}$ map $a_k(S)\mapsto b_k(S')$, $b_k(S)\mapsto c_k(S')$, and $c_k(S)\mapsto a_k(S')$. This completes the construction.

  Letting $G_L=\bigcup G_L^n$ we have that $G_L$ is complete multipartite, satisfies the parity property, and has the one-point extension property with respect to this class. Thus $G_L$ is a copy of $\infty\semistar I_\infty$. Letting $\phi_L=\bigcup\phi_L^n$ we have that $\phi_L$ is an automorphism of $G_L$. As in our previous arguments, it is not hard to see that an isomorphism $L\cong L'$ gives rise to a conjugacy between $\phi_L$ and $\phi_{L'}$.  Moreover we can recover $L$ as the set of fixed points of $\phi_L$, which guarantees that a conjugacy between $\phi_L$ and $\phi_{L'}$ gives rise to an isomorphism $L\cong L'$.
\end{proof}

We conclude this section with a note motivating the need for the involved proof of the previous theorem. The ``bowtie'' structure of Figure~\ref{fig:semigen} has an automorphism swapping $R_{AB}$ with $R_{AB}^c$ and $S_{AB}$ with $S_{AB}^c$. One might expect that this symmetry can be used to build automorphisms of $\infty\semistar I_\infty$. However the following result shows that these partial automorphisms do not extend to $\infty\semistar I_\infty$, necessitating the more complicated construction above.

\begin{prop}%greg
  If $\phi$ is an automorphism of $\infty\semistar I_\infty$ which fixes each maximal antichain setwise then $\phi$ is the identity.
\end{prop}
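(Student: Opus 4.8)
The plan is to show that if $\phi$ fixes each maximal antichain setwise but moves some point, then we can produce a finite configuration in $\infty\semistar I_\infty$ that $\phi$ cannot extend over, contradicting homogeneity—or more directly, to use the one-point extension property to build a witness that $\phi$ must move between antichains. Suppose $\phi$ fixes every maximal antichain setwise, and suppose toward a contradiction that $\phi(x_0)=x_1\neq x_0$ for some $x_0$ lying in a maximal antichain $A$; then $x_1\in A$ as well. Pick any other maximal antichain $B$ and consider how $\phi$ acts on $B$: since $\phi$ restricted to $B$ is a permutation of $B$, and the edges between $A$ and $B$ are governed by the sets $R_{AB},S_{AB}$ from Lemma~\ref{lem:semigeneric-structure}, the fact that $\phi$ is an automorphism forces $\phi(S_{AB})$ to be exactly $S_{AB}$ or $S_{AB}^c$, depending on whether $\phi$ preserves or swaps $R_{AB}$ and $R_{AB}^c$ on the $A$ side.

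The key step is to derive a rigidity constraint by playing two antichains against a third. First I would use the one-point extension property to add, for a given $B$, a fresh maximal antichain $C$ with exactly two points $c,c'$ arranged so that (via Figure~\ref{fig:semigen}) $c$ sees $S_{AB}$ one way and $c'$ sees it the other way—i.e., so that $c$ distinguishes $S_{AB}$ from its complement inside $B$. Concretely, one chooses $C$ so that $R_{CB}=S_{AB}$ up to the labeling convention, which is permitted since the parity property only constrains pairs, and a two-element antichain imposes no obstruction. Since $\phi$ fixes $C$ setwise, either $\phi$ fixes $c$ and $c'$ or swaps them; in either case $\phi$ must send $S_{AB}$ to itself, because $c$ alone pins down $S_{AB}$ as $\{y\in B : c\to y\}$, and $\phi(c)\in\{c,c'\}$ with $\phi(c')$ the other one, and the edge relation from $c'$ is the complementary one—so $\phi$ cannot swap $S_{AB}$ with $S_{AB}^c$ without also demanding $\phi(c)=c'$ and then checking this is consistent, which a parallel argument with a \emph{second} auxiliary antichain rules out. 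Running this for every pair shows $\phi$ fixes $S_{AB}$ (hence $R_{AB}$) setwise for all $A,B$.

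Once we know $\phi$ preserves every $R_{AB}$ and $S_{AB}$ setwise, I would finish by a direct homogeneity/type argument: the collection of all membership data $\{x\in R_{AB}\}$, as $B$ ranges over maximal antichains, determines the complete type of $x$ over the rest of the structure (this is essentially the content of Lemma~\ref{lem:semigeneric-structure}, which says all edges are recoverable from these sets). So if $x_0$ and $x_1=\phi(x_0)$ lie in the same antichain $A$ and have the same $R_{AB}$-membership for every $B$, they satisfy the same type over $\infty\semistar I_\infty\smallsetminus\{x_0,x_1\}$; but in the generic structure one can find (by one-point extension) a maximal antichain $B$ and an element realizing a type that separates $x_0$ from $x_1$—for instance by adding $B$ so that exactly one of $x_0,x_1$ lands in $R_{AB}$—contradicting that $\phi$ fixed $B$ setwise. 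Hence no such $x_0$ exists and $\phi$ is the identity.

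I expect the main obstacle to be the middle step: ruling out the possibility that $\phi$ coherently swaps $R_{AB}\leftrightarrow R_{AB}^c$ and $S_{AB}\leftrightarrow S_{AB}^c$ for \emph{all} pairs simultaneously—this is exactly the "bowtie" symmetry flagged in the remark before the proposition, and it is consistent pairwise. The resolution must exploit that such a global swap would have to act compatibly on a three-antichain configuration $A,B,C$ where the composite constraint around the "cycle" of Figure~\ref{fig:semigen} (the $R_{AB},R_{BC},R_{CA}$ data) is rigid: fixing each of the three antichains setwise while swapping all three $R$-sets forces a contradiction with the parity property, since the parity count of edges from a pair $\{a,a'\}\subset A$ to a pair $\{b,b'\}\subset B$ is not invariant under independently complementing on both sides unless one tracks the interaction with a third antichain. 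Pinning down this parity bookkeeping—choosing the right three antichains via one-point extension and computing the edge parities before and after the putative swap—is the technical heart of the argument.
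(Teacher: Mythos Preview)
Your proposal has a genuine gap in the middle step, and the detour it takes is unnecessary.

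First, the auxiliary-antichain maneuver is flawed as written. The structure $\infty\semistar I_\infty$ is fixed and $\phi$ is a fixed automorphism of it; you cannot ``add'' a fresh maximal antichain $C$, and in any case every maximal antichain of $\infty\semistar I_\infty$ is infinite, so there is no $C$ with exactly two points. Even reading this charitably as ``choose two points $c,c'$ in some existing antichain $C$,'' the inference that $\phi$ must fix or swap $\{c,c'\}$ fails: $\phi$ permutes all of $C$ and may send $c,c'$ anywhere in $C$. Your fallback three-antichain parity argument is also doubtful: the parity constraint is pairwise, and the bowtie swap preserves parity on each pair, so it is not clear what contradiction a cycle of three antichains would produce.

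The paper's argument avoids the middle step entirely by exploiting a \emph{third element of $A$} rather than a third antichain. Since $A$ is infinite, there is always some further element available---either $\phi^2(a)$ (if it differs from $a$) or some other $a'\in A$ (moved or fixed by $\phi$). One then uses genericity to choose a \emph{single} antichain $B$ so that the three relevant elements of $A$ are split $2$--$1$ across $R_{AB}$ and $R_{AB}^c$ in a pattern incompatible with both possibilities $\phi(R_{AB})=R_{AB}$ and $\phi(R_{AB})=R_{AB}^c$. For instance, if $a,\phi(a),\phi^2(a)$ are distinct, pick $B$ with $a,\phi(a)\in R_{AB}$ and $\phi^2(a)\in R_{AB}^c$: the first two force $\phi(R_{AB})=R_{AB}$, while the last two force $\phi(R_{AB})=R_{AB}^c$. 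The remaining cases (where $\phi^2(a)=a$) are handled by the same trick using another point $a'\in A$. This is both shorter and avoids the global-swap obstacle you correctly flagged, by never trying to control $\phi$ on all pairs at once.
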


\begin{proof}
  Suppose $\phi$ fixes each antichain setwise, and assume towards a contradiction that $a,\phi(a)$ are unequal and lie in the same maximal antichain $A$.

  Case 1: $\phi^2(a)\neq a$. Then by genericity there exists a maximal antichain $B$ such that $a,\phi(a)\in R_{AB}$ and $\phi^2(a)\in R_{AB}^c$. Then $a,\phi(a)\in R_{AB}$ implies that $\phi(R_{AB})=R_{AB}$. But on the other hand $\phi(a)\in R_{AB},\phi^2(a)\in R_{AB}^c$ implies that $\phi(R_{AB})=R_{AB}^c$. This is a contradiction.

  Case 2: $\phi^2(a)=a$ and there exists another element $a'\in A$ besides $a,\phi(a)$ such that $\phi(a')\neq a'$. Then there exists a maximal antichain $B$ such that $a,\phi(a),a'$ lie in $R_{AB}$ and $\phi(a')$ lies in $R_{AB}^c$. Then we reach a contradiction similarly to Case~1.

  Case 3: $\phi^2(a)=a$ and some other element $a'\in A$ besides $a,\phi(a)$ is fixed by $\phi$. Then there exists a maximal antichain $B$ such that $a$ lies in $R_{AB}$ and $\phi(a),a'$ lie in $R_{AB}^c$. We again reach a contradiction similarly to Case~1.
\end{proof}

\section{Toward a more general theorem}

% note about SAP and countable amalgamation
%http://mathoverflow.net/questions/71389/is-there-a-relational-countable-ultra-homogeneous-structure-whose-countable-subs

In this final section we discuss the possibility of generalizing the methods of \cite{summer,conjugacy1} and the present article to classes of homogeneous structures other than graphs. More specifically, observe that many of our results are established by finding a Borel reduction from the isomorphism relation on the class of countable substructures of some countable homogeneous structure $M$ to the conjugacy relation on $\Aut(M)$. It is natural to ask whether it is always possible to find such a reduction.

Recently, Bilge and Melleray showed that the answer is yes in the case when $M$ has a property which is stronger than homogeneity. In order to describe this result, let us recall some notions from Fra\"iss\'e theory. Let $\mathcal K$ be a class of finite (relational) structures, and let $\mathcal K_\omega$ be the class of countable structures, all of whose finite substructures lie in $\mathcal K$. We say that $\mathcal K$ has the \emph{amalgamation property} (AP) if for any $A,B_1,B_2\in\mathcal K$ and embeddings $g_i\colon A\to B_i$, there exists $C\in\mathcal{K}$ and embeddings $f_i\colon B_i\to C$ satisfying 
\[f_1 \circ g_1 = f_2 \circ g_2\text{.}
\]
We recall that $\mathcal K$ has the amalgamation property if and only if there exists a (necessarily unique) homogeneous structure $M\in\mathcal K_\omega$, called the \emph{Fra\"iss\'e limit} of $\mathcal K$, such that the class of finite substructures of $M$ is exactly $\mathcal K$.

Next we say that $\mathcal{K}$ has the \emph{strong amalgamation property} (SAP) if in the definition of AP we additionally have $f_1(B_1)\cap f_2(B_2)=f_1\circ g_1(A)=f_2\circ g_2(A)$. Finally we say that $\mathcal{K}$ has the \emph{free amalgamation property} (FAP) if in the definition of SAP we can additionally assume there are no nontrivial relations between the sets $f_1(B_1)\smallsetminus f_1\circ g_1(A)$ and $f_2(B_2)\smallsetminus f_2\circ g_2(A)$.

\begin{thm}[\cite{bilge}]
  \label{thm:bm}
  Let $\mathcal K$ be a class of finite structures with the FAP, and let $M$ be the Fra\"iss\'e limit. Then the isomorphism relation on $\mathcal K_\omega$ is Borel reducible to the conjugacy relation on $\Aut(M)$.
\end{thm}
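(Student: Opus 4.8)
The plan is to imitate the reductions carried out in Sections 2--4, but using the FAP as a black box to supply the combinatorial amalgamation lemmas (playing the role of Lemma~\ref{lem:po-amalg} and Lemma~\ref{lem:semigeneric-structure}) that were proved by hand there. Given a countable structure $N\in\mathcal K_\omega$, I would build a copy $M_N$ of the Fra\"iss\'e limit $M$ together with an automorphism $\phi_N$ of $M_N$, in stages $M_N^n$, $\phi_N^n$, so that $N\cong N'$ if and only if $\phi_N$ is conjugate to $\phi_{N'}$. As in the partial-order case, the zeroth stage $M_N^0$ consists of $\ZZ$-many ``frozen'' disjoint copies of $N$ (on which $\phi_N^0$ acts by the shift sending the $i$th copy to the $(i+1)$st), together with $\NN$-many auxiliary shift-copies $\ZZ^{(i)}$ of some fixed finite gadget that forces nontrivial $\phi_N$-orbit behaviour; the free amalgamation property guarantees that each $M_N^0$ really is in $\mathcal K_\omega$, since there are no relations between the disjoint copies.

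The recursive step proceeds exactly as before: at stage $n+1$ one runs through all one-point extension problems over $M_N^n$ that are consistent with $\mathcal K$ (and, crucially, that involve a ``marking'' relation to one of the auxiliary copies analogous to constraint $(\star)$), realizes each such type by a fresh point, and then closes off using free amalgamation so that no new relations are added to $M_N^n$ itself. One extends $\phi_N^n$ to $\phi_N^{n+1}$ in the obvious way, sending the point realizing $\tau(x,\bar a)$ to the point realizing $\tau(x,\phi_N^n(\bar a))$; this is well defined precisely because the construction is invariant under $\phi_N^n$ and because free amalgamation produces canonical (hence automorphism-equivariant) amalgams. A claim exactly parallel to the ``finitely many copies of $\ZZ$'' claim of Theorem~\ref{thm:partial} --- provable by induction on stages using the no-new-relations property of FAP --- shows that $M_N$ satisfies the one-point extension property with respect to $\mathcal K$, hence is a copy of $M$.

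For the forward direction, an isomorphism $N\cong N'$ extends canonically along the construction (again by equivariance of free amalgams) to an isomorphism $M_N\cong M_{N'}$ conjugating $\phi_N$ to $\phi_{N'}$. For the converse, one needs a $\phi_N$-invariant way of recovering $M_N^0$, and within it the copies of $N$: the auxiliary gadgets are designed so that the $\phi_N$-orbit of any point realizing a constraint involving them behaves differently (e.g.\ is infinite, or lies in an infinite ``chain'' of new relations) from the $\phi_N$-orbits of points in the frozen copies of $N$, which by construction have no outgoing marking relations and hence sit in orbits of a distinguished isomorphism type. Thus any conjugating isomorphism $\alpha$ carries $M_N^0$ to $M_{N'}^0$ and $\phi_N$-orbits to $\phi_{N'}$-orbits; passing to the quotient by the orbit equivalence relation recovers $N\cong N'$. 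The Borel-ness of $N\mapsto(M_N,\phi_N)$ is handled exactly as in the last paragraph of the proof of Theorem~\ref{thm:partial}, reserving an infinite block of $\NN$ for each stage and always choosing least available witnesses, and then transporting $\phi_N$ onto a fixed copy of $M$ by a least-witness back-and-forth.

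The main obstacle is isolating and building the auxiliary gadget that makes the converse direction work: unlike in the concrete cases of Sections 2--4, where one could point to ``a chain'' or ``the set of fixed points'', here one must produce, from FAP alone, a finite structure whose shift-orbit has a first-order (or at least Borel, $\phi$-invariant) signature distinguishing it from the orbit type of the frozen copies, and verify that adjoining $\NN$-many shifted copies of it does not destroy membership in $\mathcal K_\omega$ or the generic one-point extension property. Establishing the existence of such a gadget for every FAP class --- or, failing a uniform choice, adapting the marking scheme so that the mere presence of \emph{some} outgoing relation into the auxiliary part is enough to tell the two kinds of orbits apart --- is where the real work lies; the rest is a routine transcription of the earlier arguments with Lemma~\ref{lem:po-amalg} replaced by the defining property of FAP.
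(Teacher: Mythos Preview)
Your proposal follows the template of the \emph{original} proof of Theorem~\ref{thm:partial}, but that is not the route taken in \cite{bilge} (as described in this section and illustrated in the alternate proof of Theorem~\ref{thm:partial} at the end). The Bilge--Melleray construction is considerably simpler: one takes $M_N^0$ to be a \emph{single} copy of $N$ with $\phi_N^0$ the identity, and at each stage sets $M_N^{n+1}=E(M_N^n)$, where $E(A)$ adjoins, for every admissible finitary type $\tau$ over $A$, a pair of witnesses $x_1^\tau,x_2^\tau$ freely amalgamated over $A$. The automorphism $\phi_N^{n+1}$ is $\psi\circ\overline{\phi_N^n}$, where $\psi$ fixes $M_N^n$ pointwise and swaps each witness pair. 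One then recovers $N$ simply as the fixed-point set of $\phi_N$, since every point added at a positive stage is moved by the swap.

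This matters because the step you yourself flag as ``the main obstacle''---producing, from FAP alone, an auxiliary gadget whose shift-orbits are first-order distinguishable from those of the frozen copies of $N$---is a genuine gap, and you do not resolve it. For an arbitrary FAP class there is no reason such a gadget should exist (the language may have no binary relation at all, or no relation that can play the role of ``chain''), so your converse direction is not established. The fixed-point trick dispenses with the gadgets, the $\ZZ$-indexed copies, the $(\star)$ marking constraint, and the ``finitely many copies of $\ZZ$'' claim all at once: there is no need to manufacture distinguishable orbit types, because $N$ is literally the set of $\phi_N$-fixed points. The missing idea, then, is to let $\phi$ act trivially on $N$ and nontrivially on everything else via witness-cycling, rather than to encode $N$ through orbit shape.
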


Many of the examples considered in \cite{conjugacy1} have the FAP, though some did not. The three structures considered in this article do not. Thus it is natural to seek a property that is weaker than the FAP but still strong enough to establish the conclusion of Theorem~\ref{thm:bm}. In the rest of this section, we undertake the beginnings of such a quest.

In the proof of Theorem~\ref{thm:bm} the authors do not use the FAP directly, but rather an extension construction: given some countable $A\in\mathcal K_\omega$, they build a structure $E(A)\in\mathcal K_\omega$ which witnesses all configurations which are in finitary in the following sense. (For later use we write the definition in slightly more general terms than in \cite{bilge}.)

\begin{defn}
  Let $\mathcal C$ be a class of countable structures and $A\in\mathcal C$. Given a finite subset $A_0\subset A$ and a quantifier-free type $\tau(x,\bar a)$ over $A_0$, we say that $\tau$ is an \emph{admissible finitary type} over $A$ (with respect to the class $\mathcal C$) if there exists $B\in\mathcal C$ such that $A$ is a substructure of $B$ and $B$ contains a witness for $\tau$.
\end{defn}

We remark that in order to build a structure $E(A)$ which contains witnesses for all admissible finitary types over $A$, it is sufficient for the ambient class $\mathcal K_\omega$ to have the SAP. However we have not been able to use the SAP alone to establish the conclusion of Theorem~\ref{thm:bm}. To establish the desired Borel reduction, one must be able to construct $E(A)$ in an explicit fashion. The following definition captures the precise definable and combinatorial hypotheses we need to carry out a proof.

\begin{defn}
  \label{defn:bap}
  Let $\mathcal C$ be a class of countable structures with the SAP. We say that $\mathcal C$ has the \emph{$n$-ary Borel amalgamation property} ($\text{BAP}_n$) if there is a Borel assignment $E\colon\mathcal C\to\mathcal C$ satisfying the following properties:
  \begin{itemize}
  \item For all $A\in\mathcal C$, $E(A)$ consists of $A$ together with $n$ witnesses $x^{\tau}_1,\ldots,x^{\tau}_n$ for each admissible finite type $\tau$ over $A$;
  \item $E(A)$ has an automorphism which fixes $A$ and cycles the witnesses $x^\tau_1\mapsto\ldots\mapsto x^\tau_n\mapsto x^\tau_1$;
  \item For admissible finite types $\tau,\tau'$, the map $x^\tau_i\mapsto x^{\tau'}_i$ is an isomorphism of finite structures; and
  \item Given $A,A'\in\mathcal C$ and isomorphism $\alpha\colon A\to A'$, the natural extension $\bar\alpha\colon E(A)\to E(A')$ is in fact an isomorphism. (Here $\alpha$ naturally extends to the admissible finite types over $A$, and hence to $E(A)$ by $\bar\alpha(x^\tau_i)=x^{\alpha(\tau)}_i$.)
  \end{itemize}
\end{defn}

We observe that $\text{FAP}\implies\text{BAP}_n\implies\text{SAP}$. Indeed, the first implication is a consequence of the results of \cite{bilge}, and the second implication holds by definition. Moreover the first implication is not reversible, since the class of countable partial orders satisfies $\text{BAP}_2$ but not FAP. (Here one can establish $\text{BAP}_2$ using Lemma~\ref{lem:po-amalg}.) We conjecture that the second implication is not reversible as well. For example the class of countable tournaments satisfies the SAP but does not apparently satisfy any $\text{BAP}_n$, an issue we will return to later on.

We are finally ready to state our generalization of Theorem~\ref{thm:bm}.

\begin{thm}
  \label{thm:bap}
  Let $\mathcal K$ be a class of finite structures such that $\mathcal K_\omega$ has the $\text{BAP}_n$ for some $n\geq2$, and let $M$ be the Fra\"iss\'e limit. Then the isomorphism relation on $\mathcal K_\omega$ is Borel reducible to the conjugacy relation on $\Aut(M)$.
\end{thm}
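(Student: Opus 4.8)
The plan is to carry out the stagewise constructions of Sections~2--4 in the abstract setting of $\mathrm{BAP}_n$. Given $A\in\mathcal K_\omega$, iterate the Borel assignment $E$ to form the chain $A=E^0(A)\subseteq E^1(A)\subseteq\cdots$ and set $M_A=\bigcup_k E^k(A)$. First I would verify that $M_A$ is a copy of $M$. Every finite substructure of $M_A$ already lies in some $E^k(A)\in\mathcal K_\omega$, so $M_A\in\mathcal K_\omega$. For the one-point extension property, suppose $S\subset M_A$ is finite and $\tau(x,S)$ is an admissible finitary type over $M_A$; choosing $k$ with $S\subset E^k(A)$, the structure $B\supseteq M_A$ witnessing admissibility of $\tau$ also contains $E^k(A)$, so $\tau$ is admissible over $E^k(A)$, and hence $E^{k+1}(A)$ already contains $n$ realizations of $\tau$. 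By standard Fra\"iss\'e theory this, together with the fact that the age of $M_A$ is forced to be all of $\mathcal K$, gives $M_A\cong M$.

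Next I would build $\phi_A$ by stages, setting $\phi_A^0=\mathrm{id}_A$. Given an automorphism $\phi_A^k$ of $E^k(A)$, the fourth clause of Definition~\ref{defn:bap} yields the extension $\overline{\phi_A^k}$ of $\phi_A^k$ to an automorphism of $E^{k+1}(A)$ acting on witnesses by $x^\tau_i\mapsto x^{\phi_A^k(\tau)}_i$, and the second clause yields the cycling automorphism $\rho_k$ of $E^{k+1}(A)$ which fixes $E^k(A)$ pointwise and sends $x^\tau_i\mapsto x^\tau_{i+1}$. Put $\phi_A^{k+1}=\rho_k\circ\overline{\phi_A^k}$, so that $\phi_A^{k+1}(x^\tau_i)=x^{\phi_A^k(\tau)}_{i+1}$ and, since $\rho_k$ fixes $E^k(A)$, $\phi_A^{k+1}$ restricts to $\phi_A^k$ there. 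Then $\phi_A=\bigcup_k\phi_A^k$ is an automorphism of $M_A$, hence of $M$. This mirrors exactly the definition of $\phi_L$ in Section~4, with the $n$ witnesses playing the role of the triple $a_k(S),b_k(S),c_k(S)$.

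For the forward implication, an isomorphism $\alpha\colon A\to A'$ extends, stage by stage via the fourth clause, to isomorphisms $E^k(A)\to E^k(A')$ which cohere into an isomorphism $\Phi\colon M_A\to M_{A'}$; a routine induction on $k$, using that isomorphisms preserve quantifier-free types, shows $\Phi\phi_A\Phi^{-1}=\phi_{A'}$. For the converse, the crucial point — and the reason $n\geq2$ is required — is that $A$ is exactly the fixed-point set of $\phi_A$: it is fixed by construction, while every element of $M_A\smallsetminus A$ is a witness $x^\tau_i$ for some stage $k$, and $\phi_A(x^\tau_i)=x^{\phi_A^k(\tau)}_{i+1}\neq x^\tau_i$ because $i+1\not\equiv i\pmod n$. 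Hence a conjugacy $\Phi$ between $\phi_A$ and $\phi_{A'}$ maps $\mathrm{Fix}(\phi_A)=A$ bijectively onto $\mathrm{Fix}(\phi_{A'})=A'$ and so restricts to an isomorphism $A\cong A'$. Finally $A\mapsto\phi_A$ is Borel, using that $E$ is Borel together with the coordinate-bookkeeping and least-witness back-and-forth devices already described at the end of the proof of Theorem~\ref{thm:partial}.

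I expect the main obstacle to be the first step: checking carefully that the iterated structure $M_A$ is genuinely the Fra\"iss\'e limit, in particular that an admissible finitary type over $M_A$ descends to an admissible finitary type over a finite stage $E^k(A)$. This depends on reading the definition of admissibility correctly — the witnessing structure must contain all of $M_A$, not merely the finite parameter set — and on the SAP being in force so that the construction of each $E^k(A)$ makes sense. A secondary, more bureaucratic point is the verification that the entire assignment is Borel, but this is routine given the remarks already made for Theorem~\ref{thm:partial}.
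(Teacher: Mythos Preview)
Your proposal is correct and follows essentially the same approach as the paper: iterate the Borel extension map $E$, compose the natural lift of the previous-stage automorphism with the witness-cycling automorphism, and recover $A$ as the fixed-point set of $\phi_A$. The paper in fact does not write out the general argument but only illustrates it in the special case of partial orders (with $n=2$ and $x_1^\tau\perp x_2^\tau$), so your write-up is somewhat more detailed than what appears there; in particular your explicit verification that $M_A$ has the one-point extension property and your remark that $n\ge2$ is precisely what forces $x^{\phi_A^k(\tau)}_{i+1}\neq x^\tau_i$ go beyond the paper's sketch.
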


The details of the proof are quite similar to those of Theorem~\ref{thm:bm}. To illustrate these details, we show now how it works in the special case of partial orders.

\begin{proof}[Sketch of alternate proof of Theorem \ref{thm:partial}]
  Given a countable partial order $P$, we build a copy $Q_P$ of $\mathcal P$ and an automorphism $\phi_P$ of $Q_P$ in such a way that $P\cong P'$ if and only if $\phi_P$ is conjugate to $\phi_{P'}$. This will be done in stages $Q_P^n$ and $\phi_P^n$ as before.

  Let $Q_P^0$ be a copy of $P$, and let $\phi_P^0$ be the identity on $Q_P^0$. Next suppose $Q_P^n$ and $\phi_P^n$ have been constructed. Let $Q_P^{n+1}=E(Q_P^n)$, where the function $E$ is given by $\text{BAP}_2$.  In particular, we set $x^{\tau}_1 \perp x^{\tau}_2$. Let $\overline{\phi_P^n}$ be the natural extension of $\phi_P^n$ to all of $Q_P^{n+1}=E(Q_P^n)$, and let $\psi$ be the automorphism of $Q_P^{n+1}$ which fixes $Q_P^n$ and exchanges all witness pairs. We then let $\phi_P^{n+1}=\psi\circ\overline{\phi_P^n}$.

  Letting $Q_P=\bigcup Q_P^n$ and $\phi_P=\bigcup \phi_n$, it is easy to see that $Q_P$ is a copy of $\mathcal{P}$ and $\phi_P$ is an automorphism of it. If $P\cong P'$, then the requirements on the extension map $E$ directly imply that $\phi_P$ is conjugate to $\phi_{P'}$. Conversely, since $P$ can be recovered as the set of fixed points of $\phi_P$, we clearly have that if $\phi_P$ is conjugate to $\phi_{P'}$ then $P\cong P'$.
\end{proof}

Although Theorem~\ref{thm:bap} is strong enough to subsume some of the results from \cite{conjugacy1} as well as the case of partial orders, we have also been able to establish its conclusion even in cases when $\text{BAP}_n$ is not present. For example, we have already remarked that the class of tournaments does not apparently satisfy $\text{BAP}_n$. It is possible to use triplet witnesses $\{x_1^\tau,x_2^\tau,x_3^\tau\}$ as in $\text{BAP}_3$, and set $x_1^\tau\to x_2^\tau\to x_3^\tau\to x_1^\tau$. However it is not clear how to define the edges between $x^{\tau},x^{\tau'}$ for $\tau\neq\tau'$. Instead, we restricted to the subclass of linear orders, and made all such choices according to an inherited linear order on the types.

We have a similar situation in this article with the semigeneric multipartite digraph. It is possible to use twin witnesses as $x_1^\tau, x_2^\tau$ as in $\text{BAP}_2$; the two witnesses must belong to the same antichain and we may set $x_1^\tau \perp x_2^\tau$. On the other hand, we could not assign the edges between $x_1^{\tau} , x_1^{\tau'}$ in a Borel fashion, and relied on the subclass of linear orders once again.

To conclude, we remark that while Definition~\ref{defn:bap} is somewhat specialized, some form of Borel amalgamation may be of broader interest. It is well-known that if $\mathcal K$ has the SAP, then $\mathcal K_\omega$ has the SAP too (and similarly for the FAP). This can be established by either iteratively applying the finitary amalgamation property, or using a compactness argument. In either proof, one does not arrive at an explicit and uniquely determined amalgam: it relies on the choice of the enumeration of the finite substructures, or on the weak form of the Axiom of Choice that is the compactness theorem. A Borel form of SAP such as $\text{BAP}_1$ remedies this use of AC by isolating classes where countable amalgamation can be done in an explicit way.

\bibliographystyle{halpha}
\begin{singlespace}
  \bibliography{summer,conjugacy}
\end{singlespace}

\end{document}